\xpatchcmd{\tkzTabLine}{$0$}{$\bullet$}{}{}
\tikzset{t style/.style={style=solid}}
\numberwithin{equation}{section}       
\theoremstyle{plain} 
\newcommand{\thistheoremname}{}
\newtheorem*{genericthm*}{\thistheoremname}
\newenvironment{namedtheorem*}[1]
  {\renewcommand{\thistheoremname}{#1}%
   \begin{genericthm*}}
  {\end{genericthm*}}
\theoremstyle{plain}
\newtheorem{theo}{Theorem}
\newtheorem{prop}{Proposition}[section]
\newtheorem{coro}[prop]{Corollary}
\newtheorem{lemm}[prop]{Lemma}
\theoremstyle{definition}
\newtheorem{defi}[prop]{Definition}
\theoremstyle{remark}
\newtheoremstyle{citing}
  {3pt}
  {3pt}
  {\itshape}
  {}
  {\bfseries}
  {.}
  {.5em}
  {\thmnote{#3}}
\theoremstyle{citing}
\newcommand{\N}{\mathbb{N}}
\newcommand{\R}{\mathbb{R}}
\newcommand{\hT}{\widehat{T}}
\newcommand{\teta}{\widetilde{\teta}}
\newcommand{\e}{\varepsilon}
\renewcommand{\=}{ : = }
\DeclareMathOperator{\dist}{dist}
\tikzset{
  declare function={
    sgn(\x) = (and(\x<0, 1) * -1) +
    (and(\x>0, 1) * 1) +
    (and(\x==0, 1) * 0);
  }
}
\begin{document}


\usetikzlibrary{shapes, arrows, calc, arrows.meta, fit, positioning, quotes} 
\tikzset{  
    state/.style ={ellipse, draw, minimum width = 0.9 cm}, 
    point/.style = {circle, draw, inner sep=0.18cm, fill, node contents={}},  
    bidirected/.style={Latex-Latex,dashed}, 
    el/.style = {inner sep=2.5pt, align=right, sloped}  
}  

\colorlet{ColorGray}{gray!10}

\title{Non-existence of wandering intervals for asymmetric unimodal maps}
\author{Jorge Olivares-Vinales} \thanks{} 
\address{Shanghai Center for Mathematical Sciences, Jiangwan Campus, Fudan University, No 2005 Songhu Road, Shanghai, China 200438 }
\email{jolivaresv@fudan.edu.cn}

\author{Weixiao Shen} \thanks{} 
\address{Shanghai Center for Mathematical Sciences, New Cornerstone Science 
Laboratory, Jiangwan Campus, Fudan University, No 2005 Songhu Road, Shanghai, China 200438 }
\email{wxshen@fudan.edu.cn}

\begin{abstract}
  We prove that an asymmetric unimodal map has no wandering 
  intervals.
\end{abstract}

\maketitle


\section{Introduction}
\label{Section_Introduction}
A natural problem in dynamical systems is the classification of ``equivalent"
systems. In the real one-dimensional case, the ordering of the phase space 
(the circle or a compact interval) gives raise to the question when
the order of the orbits of the systems determines it.
More precisely, we 
can ask whether two maps which are combinatorially equivalent are 
topologically conjugated. 

This problem goes back to Poincare's work dealing
with circle homeomorphisms, and a positive answer depends
on the non-existence of wandering intervals. The first result in this 
direction is due to Denjoy (1932) \cite{Denjoy1932_wandering-intervals} 
where he proved the non-existence of wandering intervals for a $C^1$ 
diffeomorphism $f$ of the circle  for which $\log|Df|$ has bounded variation. Later (1963),
Schwartz \cite{Schwartz_A1963_non-existence_of_wandering_intervals} gave a 
different proof of Denjoy result. His proof requires a slightly stronger smoothness 
hypothesis, but it can be extended to more general settings. The smoothness assumptions guarantee
distortion control of high iterates on certain intervals.

In the setting of maps with 
critical points, additional techniques, notably cross-ratio distortion estimates, were developed. 
Guckenheimer \cite{Guckenheimer1979_sensitive_dependence}
proved the non-existence of wandering intervals for unimodal maps with 
negative Schwarzian derivative and no critical points of inflexion type,
see also \cite{Misiurewicz1981_structure_of_mapping}. Yoccoz 
\cite{Yoccoz1984_Denjoy_counterexample} proved
the non-existence of wandering intervals for $C^{\infty}$ homeomorphisms 
of the circle having only non-flat critical points.
de Melo and van Strien \cite{deMelo-vanStrien1989_Structure_theorem} proved 
the same result for smooth unimodal maps with non-flat critical points. 
Blokh and Lyubich \cite{Lyubich1989_non-existence_of_wandering_intervals, Blokh-Lyubich_non-existence_of_wandering_intervals} proved the result in
the case of $C^2$ multimodal maps with a finite number of non-flat
critical points of turning type. Martens, de Melo, and van Strien 
\cite{Martens_de_Melo_van_Strien_Julia_Fatou_Sullivan} proved the 
non-existence of wandering intervals for smooth interval maps with 
non-flat critical points, see also
\cite{vanStrien-Vargas2004_multimodal_maps}.

In all the results for maps with critical points mentioned above, it is an
essential assumption that near
each critical point the map $f$ under consideration is almost symmetric. That is, if $c$ is a 
critical point of $f$, then   
\[ \frac{|f(c+ \e) - f(c)|}{|f(c-\e)-f(c)|}, \] 
is bounded from above and away from zero.

In this paper, a map 
$f \colon [0,1] \longrightarrow [0,1]$ is called $C^r$, $r\ge 1$,
\emph{unimodal} if the following holds:

\begin{itemize}
    \item There exists $c\in (0,1)$ such that $f$ is $C^r$ and $f'\not=0$ on $[0,1]\setminus \{c\}$;
    \item there exist real numbers $\ell_-,\ell_+\ge 1$, a neighborhood $U$ of
    $c$, and $C^r$ diffeomorphisms $\phi_-,\phi_+, \varphi_-, \varphi_+$ of $\R$
    with $\phi_{\pm}(c) = \varphi_{\pm}(f(c)) = 0$ such that  
    \[ |\varphi_- \circ f(x)| = |\phi_-(x)|^{\ell_-}, \] for every $x \in U$ with $x \le c$, and 
    \[ |\varphi_+ \circ f(x)| = |\phi_+(x)|^{\ell_+}, \] for every $x \in U$ with $x \ge c$.
\end{itemize}
Let $\mathcal{U}^r$ denote the collection of $C^r$ unimodal maps.
If $\ell_- = \ell_+$ we say that the map is \emph{(weakly) symmetric}, and 
if $\ell_- \neq \ell_+$ we say that the map is \emph{(strongly) asymmetric}.

It is a long-standing open question whether an asymmetric unimodal map has a wandering interval, see
\cite[Chapter IV.11]{MevS11}.
Recently, Kozlovski and van Strien \cite{Kozlovski_van_Strien_Asymmetric_2020} proved the absence of 
wandering intervals for asymmetric S-unimodal maps with $2^{\infty}$ combinatorics 
and $1 = \ell_- \le \ell_+$ . Their method relies in an essential way on the specific 
combinatorics, and the assumption that one of the critical orders is equal 
to one.

In this paper, we shall prove the following theorem. 

\begin{namedtheorem*}{Main Theorem}
    Each $f\in \mathcal{U}^2$ has no wandering interval.
\end{namedtheorem*}

Recall that a {\em wandering interval} of $f$ is an interval $J\subset [0,1]$ such that 
\begin{itemize}
\item the intervals $J$, $f(J)$, $f^2(J)$, $\cdots$, are pairwise disjoint;
\item for each $x\in J$, $\omega(x)$ is not a periodic orbit. 
\end{itemize}

A well known consequence of non-existence of wandering intervals is the following contraction principle:
for any $\e>0$ there exists $\delta>0$ such that if $I$ is an interval with $|I|<\delta$ and disjoint from
the immediate basin of the periodic attractors, then for any $n \ge 1$, any component $J$ of $f^{-n}(I)$ 
has length less than $\e$. This contraction principle plays important roles in one-dimensional dynamics. 
For example, it implies the following result.

\begin{coro}
    If $f$ is a $C^2$ (asymmetric) unimodal map, then there exists 
    $\lambda > 1$ and $n_0 \in \N$ so that \[ |Df(p)|\ge \lambda \] for every 
    periodic point $p$ of period $n \geq n_0$.
\end{coro}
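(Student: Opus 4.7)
The plan is to argue by contradiction using the contraction principle recalled above. Suppose there exist periodic points $p_k$ of periods $n_k\to\infty$ with multipliers $\mu_k:=|Df^{n_k}(p_k)|\to 1$. First I would invoke the classical one-dimensional dynamics for $C^2$ unimodal maps with non-flat critical point (a Singer-type argument available in this regularity via cross-ratio methods): every attracting or neutral periodic orbit captures the forward orbit of the unique critical point $c$ or of a boundary point, so there are only finitely many non-repelling periodic orbits. Consequently, for all large $k$ the point $p_k$ is strictly repelling, $\mu_k>1$, and its forward orbit stays a definite distance away from the immediate basin of the periodic attractors.

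Next I would construct a diffeomorphic pullback around $p_k$ with definite Koebe space. Fix a small $\tau\in(0,1)$. The contraction principle yields $\delta>0$ such that every component of $f^{-N}(I)$ has length less than $\tau\delta$ whenever $I$ is an interval of length at most $(1+2\tau)\delta$ disjoint from the immediate basin of the periodic attractors and $N\ge 1$. Set
\[
I_k=(p_k-\delta,\,p_k+\delta)\subset\widehat I_k=(p_k-(1+\tau)\delta,\,p_k+(1+\tau)\delta),
\]
and let $J_k\subset\widehat J_k$ be the components of $f^{-n_k}(I_k)$ and $f^{-n_k}(\widehat I_k)$ containing $p_k$, so that $|\widehat J_k|<\tau\delta$. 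A standard monotone-pullback argument, exploiting that all intermediate pullback pieces along the orbit are uniformly small (by the contraction principle) and that $\widehat I_k$ provides $\tau$-Koebe space around $I_k$, should show that $f^{n_k}\colon\widehat J_k\to\widehat I_k$ is a diffeomorphism; the Koebe principle for $C^2$ maps with non-flat critical points then bounds the distortion of $f^{n_k}|_{J_k}$ by a constant $K=K(\tau)$ depending only on $\tau$.

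The contradiction is then immediate. By bounded distortion and the mean value theorem,
\[
\mu_k\ge K(\tau)^{-1}\,\frac{|I_k|}{|J_k|}\ge K(\tau)^{-1}\,\frac{2\delta}{\tau\delta}=\frac{2}{K(\tau)\,\tau},
\]
and the right-hand side can be made arbitrarily large by choosing $\tau$ small, contradicting $\mu_k\to 1$. This shows in particular that for every $\lambda>1$ there are only finitely many periodic orbits with $|Df^n(p)|<\lambda$, so some uniform choice of $\lambda$ and $n_0$ works.

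The hard part will be the second step, i.e.\ verifying that $\widehat J_k$ is a monotone diffeomorphic pullback with uniform Koebe space, even though the orbit of $p_k$ may return arbitrarily close to the critical point. The intermediate pullback pieces along the orbit could a priori swallow $c$; however, choosing the initial interval small enough via the contraction principle and then running the classical $C^2$ cross-ratio estimates inductively along the backward orbit provides both the non-folding of the intermediate preimages and the Koebe distortion control. Making this machinery work cleanly, with the distortion constant $K(\tau)$ independent of $k$, is the technical heart of the argument.
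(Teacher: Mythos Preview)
The paper does not give its own proof of this corollary; it simply defers to \cite{Martens_de_Melo_van_Strien_Julia_Fatou_Sullivan} and \cite[Chapter~IV]{MevS11}. Your outline has the same architecture as those references---contraction principle plus Koebe distortion---so in spirit you are on the intended route.

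There is, however, a genuine gap in your second step. You assert that the contraction principle produces $\delta>0$ for which \emph{every} pullback of an interval of length at most $(1+2\tau)\delta$ has length below $\tau\delta$. The contraction principle as stated in the paper does not say this: it gives, for each $\e>0$, some $\delta(\e)>0$ with pullbacks of $\delta(\e)$-intervals shorter than $\e$, but provides no lower bound on the ratio $\delta(\e)/\e$. In fact such a bound is impossible here, because the $f$-preimage through $c$ of an interval of length $L$ near $c_1$ has length $\asymp L^{1/\ell_\pm}\gg L$, which forces $\delta(\e)\lesssim \e^{\max(\ell_+,\ell_-)}$ and hence $\delta(\e)/\e\to 0$. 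Consequently your key inequality $\mu_k\ge 2/(K(\tau)\tau)$ does not follow, and the same issue undercuts the hope that ``all intermediate pullback pieces are uniformly small'' will by itself yield monotonicity of $f^{n_k}|_{\widehat J_k}$: smallness of a pullback does not prevent it from containing $c$. The cited proofs organise things differently: rather than fixing a $\delta$-window around $p_k$ and pulling it back, one works with the \emph{maximal} interval on which $f^{n_k}$ is monotone (so monotonicity is built in) and obtains Koebe space on the image side; the contraction principle then enters only to bound the length of the domain, not a ratio of lengths. Your final paragraph correctly flags this step as the crux, but the mechanism you propose for handling it does not work as written.
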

For a proof of this corollary see 
\cite{Martens_de_Melo_van_Strien_Julia_Fatou_Sullivan}, 
\cite[Chapter 4]{MevS11}.

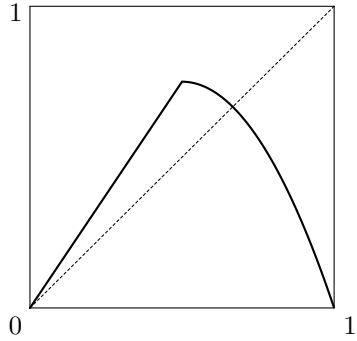
\begin{figure}
    \centering
    \begin{tikzpicture}[line cap=round,line join=round,>=triangle 45,x=2cm,y=2cm]
      \clip(-1.3,-1.3) rectangle (1.3,1.3);
      \draw (1,1)-- (-1,1);
      \draw (-1,1)-- (-1,-1);
      \draw (-1,-1)-- (1,-1);
      \draw (1,-1)-- (1,1);
      \draw [dash pattern=on 1pt off 1pt] (-1,-1)-- (1,1);
      \draw (-1.2,1.08) node[anchor=north west] {1};
      \draw (1,-1.) node[anchor=north west] {1};
      \draw (-1.2,-1.) node[anchor=north west] {0};
      \draw [line width=0.8] (-1,-1)--(0,0.5);
      \draw[line width=0.8,smooth,samples=100,domain=.0:1.0] plot(\x,{1.5*(1-(\x)^2)-1});
    \end{tikzpicture}
    \caption{Graphic of an asymmetric unimodal map}
    \label{fig:enter-label}
\end{figure}

\subsection*{Acknowledgments}
The work is supported by the National R\&D Program 2021YFA1003200.
WS is also supported by the New Cornerstone Science Fundation through the New Cornerstone Investigator Program and the XPLORER PRIZE.

\section{Proof of the Main Theorem} 
\label{section:unimodal_case}
In this section, we shall prove the Main Theorem, suspending the proof of a few technical lemmas to the next section.  

Throughout the rest of this work, we will use $\N$ to denote the set of
positive integers and $\N_0$, to denote the set of non-negative integers,
thus $\N_0 \= \N \cup \{ 0 \}$. 
For an interval $J \subset \R$ we denote its length by $|J|$.

Let $f\in\mathcal{U}^2$ and let $c\in (0,1)$ be its critical point.
Let $V$ be a small neighborhood of the critical point $c$ so that 
$f(V) \subset [c,f(c)]$. Given $x \in V \setminus \{c\}$, we denote by 
$\tau(x)$ the point in $V$ so that $f(\tau(x)) = f(x)$ and $\tau(x)
\neq x$. We write $c_n=f^n(c)$. 

The Main Theorem will be proved by contradiction. Assume that $f$ has a wandering interval $Q$. We may 
assume that $Q$ is a maximal wandering interval of $f$ in the sense that it is not strictly contained in 
another wandering interval. Let $$Q_n=f^n(Q).$$ 
Clearly, $|Q_n|\to 0$ as $n\to\infty$. As a consequence of ~\cite{Schwartz_A1963_non-existence_of_wandering_intervals}, there is a sequence $n_k\to\infty$ such that $f^{n_k}(Q)\to c$, see~\cite[Theorem I.2.2]{MevS11} or~\cite[Lemma 4.1]{deMelo-vanStrien1989_Structure_theorem}.   
It follows that $$\overline{Q}_n\not\ni c,\,\forall n\in \mathbb{N}_0.$$

\begin{defi}
    \label{def:closest_return_moment}
    We say that $n\ge 1$ is {\em a closest return moment (to the critical value)} if 
    \[ \dist(Q_n, c_1)< \dist (Q_k, c_1)\] for each $0\le k<n$. 
\end{defi}
Let $s(0)=0$ and let $$s(1)<s(2)<\cdots$$ be all the closest return moments. Then
$$Q_{s(k)}\to c_1 \text{ as } k\to\infty.$$ 
For each $k \in \N_0$, we call
$Q_{s(k)}$ the \emph{$k$th closest return of $Q$ to $c_1$}.
For $k \in \N_0$, let $a_k, b_k$ denote the left and
right endpoint of $Q_{s(k)}$ respectively. 
Put 
\[ \eta_k:=\frac{|b_k-c_1|}{|a_k-c_1|}.\] 

The following lemma can be proved in exactly the same way as the symmetric case. 

\begin{namedtheorem*}{Lemma A}(\cite{Guckenheimer1979_sensitive_dependence}, \cite{Blokh-Lyubich_non-existence_of_wandering_intervals}, \cite{Martens_de_Melo_van_Strien_Julia_Fatou_Sullivan})
    \label{lem:etan0} 
    If $f$ has a wandering interval $Q$, then \[ \eta_k \to 0, \text{ as }k \to \infty.\]
\end{namedtheorem*}

The following lemma is the main step of our argument. It shows that, for $k$ large enough, pre-closest 
returns $Q_{s(k)-1}$, are all on the same side with respect to the turning point $c$. The side they lie is
the one corresponding to the larger critical order. 

\begin{namedtheorem*}{Lemma B} 
    \label{lem:position_of_Q_s(k)}
    If $f$ has a wandering interval $Q$, then $\ell_-\not=\ell_+$ and there exists a constant $\alpha>0$ such that one of the following holds:
    \begin{enumerate}
        \item $\ell_-<\ell_+$; for all $k$ large, $Q_{s(k)-1}$ lies to the right hand side of $c$; and 
        $$\liminf_{k\to\infty} \frac{\eta_{k-1}^\alpha|a_k-c_1|}{|a_{k-1}-c_1|^{\ell_+}}>0,$$  
        \item $\ell_+<\ell_-$; for all $k$ large, $Q_{s(k)-1}$ lies to the left hand side of $c$; and 
        $$\liminf_{k\to\infty} \frac{\eta_{k-1}^\alpha|a_k-c_1|}{|a_{k-1}-c_1|^{\ell_-}}>0.$$
    \end{enumerate}
\end{namedtheorem*}

We need also the following lemma. 
\begin{namedtheorem*}{Lemma C} 
    \label{lem:decreasing_estimates_eta}
    There exists a constant $C>0$ such that for all $k$ large, 
    $$\eta_k\le C \eta_{k-1}^{\max(\ell_+,\ell_-)}.$$
\end{namedtheorem*}

\begin{proof}[Completion of proof of the Main Theorem]
To be definite, we assume that $\ell_-\le \ell_+$. Then by Lemma B, $\ell_-<\ell_+$. 

By Lemma C, there is a constant $C_1>0$ such that 
\begin{equation}\label{eqn:etaspeed}
    \eta_k\le e^{-C_1\ell_+^k}
\end{equation}
holds for all $k$ large. Since $\eta_{k-1}\to 0$, by Lemma B, we have $$|a_k-c_1|>|a_{k-1}-c_1|^{\ell_+}$$
for all $k$ large. Thus there is $C_2>0$ such that 
\begin{equation}\label{eqn:akspeed0}
    |a_k-c_1|\ge e^{-C_2\ell_+^k}
\end{equation}
holds for all $k$ large. 

By (\ref{eqn:etaspeed}) and (\ref{eqn:akspeed0}), there is $\beta>0$ such that 
$$\lim_{k\to\infty} \frac{\eta_{k-1}^\alpha}{|a_{k-1}-c_1|^\beta}= 0.$$
Applying Lemma B again, we have 
$$|a_k-c_1|\ge |a_{k-1}-c_1|^{\ell_+-\beta},$$
holds for all $k$ large.
Thus
$$|a_k-c_1|\ge e^{-C_3(\ell_+-\beta)^k}.$$ But
\begin{equation}\label{eqn:etavsa}
    |a_{k}-c_1|\le |b_{k-1}-c_1|\le \eta_{k-1}\le e^{-C_1\ell_+^{k-1}},
\end{equation}
a contradiction.
\end{proof}


\section{Proof of Lemmas}
In this section, we prove Lemmas A, B and C. Throughout, we fix $f\in \mathcal{U}^2$.

\subsection{Cross-ratio distortion and the Koebe principle}
\label{Section_cross-ration_distortion}
We shall use the standard cross-ratio estimates and the Koebe principle. 

Let $J \subset T$ be intervals such that $T \setminus J$ has two connected components $L$ and $R$. 
We consider the following \emph{cross-ratio} of these intervals
\begin{equation}
    \label{eq:cross-ratio_of_intervals}
    D(T,J) \= \frac{|T|\, |J|}{|L| \, |R|}.
\end{equation}
If $g:T\to g(T)$ is a homeomorphism, then we define the \emph{cross-ratio distortion of $g$} as 
\begin{equation}
    \label{eq:cross-ratio_distortion}
    B(g,T,J) \= \frac{D(g(T),g(J))}{D(T,J)}.
\end{equation}

We shall often use the following observation.

\begin{lemm}\label{lem:onesidebig}
Let $J \subset T$ be intervals such that $T \setminus J$ has two
connected components $L$ and $R$. Suppose that $|L|\ge |J|$. Then 
$$D(T,J)\le \frac{2|J|}{|R|}+1.$$
\end{lemm}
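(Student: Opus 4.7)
The plan is to simply expand $D(T,J)$ using the decomposition $|T|=|L|+|J|+|R|$ and then use the hypothesis $|L|\ge |J|$ to bound the resulting terms.

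First, I write
$$D(T,J)=\frac{|T|\,|J|}{|L|\,|R|}=\frac{(|L|+|J|+|R|)\,|J|}{|L|\,|R|}=\frac{|J|}{|R|}+\frac{|J|^{2}}{|L|\,|R|}+\frac{|J|}{|L|}.$$
The hypothesis $|L|\ge |J|$ immediately gives $\tfrac{|J|}{|L|}\le 1$ and $\tfrac{|J|^{2}}{|L|\,|R|}\le \tfrac{|J|}{|R|}$. Substituting these two bounds into the previous display yields $D(T,J)\le \tfrac{2|J|}{|R|}+1$, which is the desired inequality.

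There is essentially no obstacle here; the lemma is a one-line algebraic manipulation once the cross-ratio is expanded, and the only ingredient beyond the definition is the elementary inequality coming from $|L|\ge |J|$. I would present the proof in just two lines.
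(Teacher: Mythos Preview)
Your proof is correct and takes essentially the same approach as the paper: both expand $D(T,J)$ using $|T|=|L|+|J|+|R|$ and apply $|L|\ge |J|$. The paper normalizes by setting $x=|L|/|J|$, $y=|R|/|J|$ and bounds $\frac{1+x+y}{xy}\le \frac{2+y}{y}$, which is the same computation you wrote out term by term.
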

\begin{proof} Let $x=|L|/|J|$ and $y=|R|/|J|$. Then $x\ge 1$, hence 
$$D(T,J)=\frac{1+x+y}{xy}\le \frac{2+y}{y}=\frac{2|J|}{|R|}+1.$$
\end{proof}

The following results are well-known and can be found in for example \cite[Chapter IV]{MevS11}
\begin{theo}\label{theo:CR_distortion_bound}
    Let $f \in\mathcal{U}^2$. Then there exists a constant $C_0 = C_0(f)>0$ such that if 
    $J \subset T \subset [0,1]$ are intervals with $T \setminus J$
    consisting of two connected components, and $n$ is a positive integer such that 
    $f^n|T$ is monotone, then
    \begin{equation}
        \label{eq:CR_distortion_bound}
        \log B(f^n,T,J) \geq -C_0\sum_{j=0}^{n-1}|f^j(T)|.
    \end{equation}
\end{theo}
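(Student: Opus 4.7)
The plan is to reduce the $n$-step distortion bound to a one-step bound for $f$ via the multiplicativity of $B$, and then establish the one-step bound uniformly in $T'$ by splitting on the location of $T'$ relative to the critical point.

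A direct computation from the definitions yields, for composable monotone maps $g,h$, the identity $B(g\circ h,T,J)=B(g,h(T),h(J))\cdot B(h,T,J)$. Iterating along $f^n=f\circ\cdots\circ f$, and using that $f^n|T$ being monotone forces each restriction $f|f^j(T)$ to be a $C^2$ diffeomorphism on a neighborhood, one obtains
\[ \log B(f^n,T,J)=\sum_{j=0}^{n-1}\log B(f,f^j(T),f^j(J)). \]
Hence it suffices to exhibit $C>0$, depending only on $f$, such that for every interval $T'\subset[0,1]$ with $c\notin\interior(T')$ and every $J'\subset T'$ with $T'\setminus J'$ having two components, $\log B(f,T',J')\ge -C|T'|$.

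For the one-step bound I split on the position of $T'$. If $T'$ lies outside a small fixed neighborhood $V$ of $c$, then the nonlinearity $N(f)=f''/f'$ is uniformly bounded on $T'$ by compactness. Applying the mean value theorem in the form $|f(I)|/|I|=f'(\xi_I)$ for each $I\in\{T',J',L,R\}$, one writes
\[ \log B(f,T',J')=[\log f'(\xi_{T'})-\log f'(\xi_L)]+[\log f'(\xi_{J'})-\log f'(\xi_R)], \]
and MVT applied to $\log f'$ bounds each bracket by $\sup_{T'}|N(f)|\cdot|T'|$, giving $|\log B|\le 2\sup|N(f)|\cdot|T'|$. If instead $T'\subset V$, I use the factorization provided by the definition of $\mathcal{U}^2$: on the side of $c$ to which $T'$ belongs, $f$ equals $\varphi_\pm^{-1}\circ p\circ\phi_\pm$ where $p(t)=\pm t^{\ell_\pm}$, and by multiplicativity
\[ \log B(f,T',J')=\log B(\varphi_\pm^{-1},p\circ\phi_\pm(T'),p\circ\phi_\pm(J'))+\log B(p,\phi_\pm(T'),\phi_\pm(J'))+\log B(\phi_\pm,T',J'). \]
The outer factors are log cross-ratio distortions of $C^2$ diffeomorphisms of $\R$ with bounded nonlinearity, so by the argument of the first case they are bounded below by $-C'|T'|$ (absorbing the comparability $|\phi_\pm(T')|\asymp|T'|$ into the constant). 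The middle factor is non-negative: a direct computation gives $S(p)=-(\ell_\pm^2-1)/(2t^2)\le 0$ for $\ell_\pm\ge 1$, and it is classical that a $C^3$ map with non-positive Schwarzian satisfies $B\ge 1$. Summing the one-step bound over $j$ yields the theorem.

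The main obstacle is the near-critical case: the naive nonlinearity argument fails because $N(f)$ blows up as one approaches $c$, yet the desired bound is of order $|T'|$ alone, with no compensating factor. Factoring through $p$ resolves this, since $p$'s non-positive Schwarzian contributes favorably to the cross-ratio, while only the diffeomorphic outer pieces $\phi_\pm,\varphi_\pm^{-1}$ produce a $C|T'|$ loss.
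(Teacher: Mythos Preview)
The paper does not prove this statement itself; it is quoted as a well-known result from de Melo--van Strien, Chapter~IV. Your sketch is precisely the standard argument found there: reduce to a one-step bound by the multiplicativity of $B$, then on each side of $c$ factor $f$ as (diffeo)$\circ$(power map)$\circ$(diffeo), using that power maps have $Sp\le 0$ and hence $B(p,\cdot,\cdot)\ge 1$, while the $C^2$ diffeomorphic factors contribute at most $O(|T'|)$ via bounded nonlinearity.

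Two small points are worth tightening. First, your case split ($T'$ disjoint from $V$ versus $T'\subset V$) is not exhaustive: an interval lying on one side of $c$ may cross $\partial V$. The usual remedy is to note that the factorization extends to the entire monotone branch: on $[c,1]$, say, $\tilde\phi_+(x):=|\varphi_+(f(x))|^{1/\ell_+}$ agrees with $|\phi_+|$ on $U$ and is a $C^2$ diffeomorphism of all of $[c,1]$ (it is $C^2$ with nonvanishing derivative on $(c,1]$ because $\varphi_+\circ f$ is $C^2$ and nonzero there, and near $c$ it equals $|\phi_+|$), so one may simply take $V$ to be the whole branch and the first case becomes superfluous. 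Second, the nonlinearity bound on the $\varphi_\pm^{-1}$ factor is $-C'\,|p\circ\phi_\pm(T')|\asymp -C'\,|f(T')|$, not $-C'\,|\phi_\pm(T')|$; this is still $\le C''|T'|$ because $\ell_\pm\ge 1$ makes $f'$ bounded on $[0,1]$, but your parenthetical justification points to the wrong comparability.
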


We say that an interval $T$ contains a $\tau$-scaled neighborhood of another interval $J$ if $T\setminus J$ has two components and each component has length at least $\tau |J|$. 
\begin{theo}[One-sided Koebe principle]
    \label{lemm:minimum_principle}
    For each $C>0$, $\rho>0$ there exists $K<\infty$ with the following property. Let $T=[u,v]$ and let $g:T\to g(T)$ be a $C^1$ diffeomorphism. Assume that for any intervals $J_*\subset T_*\subset T$ such that $T_*\setminus J_*$ has two connected components, we have 
     $$B(g, T_*, J_*)\ge C.$$ Then $$|Dg(w)|\le K |Dg(v)|$$
     for each $w\in [u,v]$ with 
     $$\frac{|g(w)-g(v)|}{|g(T)|}\ge \rho.$$
\end{theo}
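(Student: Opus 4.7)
The plan is to normalize the setup by affine conjugacy and then extract two complementary pointwise bounds from the cross-ratio hypothesis and combine them. First, I would pre-compose with the affine map from $[0,1]$ onto $[u,v]$ and post-compose with the affine map from $[g(u), g(v)]$ onto $[0,1]$ (assuming without loss of generality that $g$ is orientation-preserving). Since affine maps preserve cross-ratios, this reduces the situation to $T = [0,1]$, $g \colon [0,1] \to [0,1]$ with $g(0) = 0$ and $g(1) = 1$; the hypothesis $B(g, T_*, J_*) \ge C$ is preserved, and the condition on $w$ becomes $1 - g(w) \ge \rho$. It then suffices to show $g'(w) \le K g'(1)$.

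Writing the hypothesis as $\phi(L)\phi(R) \le C^{-1}\phi(T_*)\phi(J_*)$, where $\phi(I) := |g(I)|/|I|$, I would apply it with $T_* = [w, 1]$ and $J_* = [w + \e_1, 1 - \e_2]$, and then let $\e_1, \e_2 \to 0$. The components $L = [w, w+\e_1]$ and $R = [1 - \e_2, 1]$ shrink to $\{w\}$ and $\{1\}$ respectively, so $\phi(L) \to g'(w)$ and $\phi(R) \to g'(1)$, while $\phi(T_*) = (1-g(w))/(1-w) =: M$ and $\phi(J_*) \to M$. The resulting key inequality is
\[
g'(w)\, g'(1) \le M^2/C. \qquad (\ast)
\]

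From $(\ast)$ the theorem reduces to establishing a complementary lower bound $g'(1) \ge c(C, \rho)\, M$, since then $g'(w) \le M^2/(C g'(1)) \le M/(Cc) \le g'(1)/(Cc^2)$ and I can take $K = 1/(C c(C,\rho)^2)$. To establish this lower bound, I would iterate: construct a sequence $\xi_0 = w < \xi_1 < \xi_2 < \cdots \to 1$ via the mean value theorem with $g'(\xi_{k+1}) = (1-g(\xi_k))/(1-\xi_k)$, so that $g'(\xi_1) = M$ and $g'(\xi_k) \to g'(1)$ by continuity of $g'$. Applying $(\ast)$ on each sub-interval $[\xi_k, 1]$, which inherits the cross-ratio hypothesis, produces $g'(\xi_{k+1})^2 \ge C g'(\xi_k) g'(1)$. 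Combined with the interior lower bound $g'(\xi) \ge C g(\xi)(1-g(\xi))/[\xi(1-\xi)]$, obtained by shrinking $J_*$ to $\{\xi\}$ inside $T_* = [0,1]$, together with the assumption $g(w) \le 1-\rho$ (which prevents $g$ from being too steep near $1$), one tracks the recursion to extract the desired lower bound on $g'(1)$. The hard part will be this final bookkeeping: ensuring that the constant $c$ depends only on $C$ and $\rho$, independent of the particular $g$ and $w$.
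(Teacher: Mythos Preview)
The paper does not prove this theorem; it is quoted (together with Theorem~\ref{theo:CR_distortion_bound}) as a well-known result from de Melo--van Strien, so there is no argument in the paper to compare yours against.

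More to the point, your final step cannot be completed, and the obstruction is not bookkeeping: the statement as literally written is false. Take $T=[0,1]$ and the M\"obius diffeomorphism $g(x)=x/\bigl(\varepsilon+(1-\varepsilon)x\bigr)$. Since M\"obius maps preserve the cross-ratio $D$, we have $B(g,T_*,J_*)=1$ for every admissible pair $J_*\subset T_*\subset T$; yet $g'(0)=1/\varepsilon$ and $g'(1)=\varepsilon$, so with $w=u=0$ (for which $|g(w)-g(v)|/|g(T)|=1\ge\rho$) one obtains $|Dg(w)|/|Dg(v)|=\varepsilon^{-2}$, which is unbounded as $\varepsilon\to 0$. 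Your inequality $(\ast)$, namely $g'(w)\,g'(1)\le M^2/C$, is correctly derived; but the complementary bound $g'(1)\ge c(C,\rho)\,M$ that you set out to establish is exactly what fails in this example, where $M=1$ while $g'(1)=\varepsilon$. Your recursion $g'(\xi_{k+1})^2\ge C\,g'(\xi_k)\,g'(1)$ is perfectly consistent with $g'(1)/M$ being arbitrarily small --- indeed for $C\le 1$ it imposes no upper bound on $M/g'(1)$ at all --- so no amount of iteration will close the argument.

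If you inspect the two places in the proof of Lemma~B where the paper invokes this result, the right-hand side of the derivative bound is not $|Dg(v)|$ but the secant slope $|g(J)|/|J|$ over a sub-interval $J\subset T$ whose image occupies a definite proportion of $g(T)$; that is the genuine one-sided Koebe inequality from the literature, and a bound of that form \emph{does} follow from your $(\ast)$ combined with the interior lower bound $g'(\xi)\ge C\,g(\xi)(1-g(\xi))/[\xi(1-\xi)]$. The version with $|Dg(v)|$ on the right appears to be a slip in the statement.
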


\subsection{Proof of Lemma A}
This section is devoted to the proof of Lemma A. In the case that $f$ has negative Schwarzian, this is 
proved in \cite{Guckenheimer1979_sensitive_dependence}. For general $f\in \mathcal{U}^2$, this is 
essentially proved in \cite{Blokh-Lyubich_non-existence_of_wandering_intervals} and 
\cite{Martens_de_Melo_van_Strien_Julia_Fatou_Sullivan}. We include a detailed proof for the reader's convenience.

For $k \in \N$, put $$T_k=(a_{k-1}, c_1)\supset \widehat{T}_k=(a_{k-1}, b_{k+1}).$$ For
$0 \leq j \le s(k)$, let $T_k^j$ (resp. $\widehat{T}_k^j$) denote the connected component of 
$f^{-(s(k)-j)}(T_k)$ (resp. $f^{-(s(k)-j)}(\widehat{T}_k)$) which contains $Q_j$. 

\begin{lemm}
    \label{lem:T_k}
    For every $k \in \N$, $f^{s(k)}: T_k^0\to T_k$ is a
    diffeomorphism and $T_k^0\not\ni c_1$.  Moreover, there exists a constant $M>0$ such that  
    \begin{equation}\label{eqn:hattk}
    B(f^{s(k)}, \widehat{T}_k^0, Q)\ge M,
    \end{equation}
    for each $k\ge 1$. 
\end{lemm}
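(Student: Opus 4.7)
The lemma makes three assertions: (A) $f^{s(k)}:T_k^0\to T_k$ is a diffeomorphism, (B) $c_1\notin T_k^0$, and (C) the cross-ratio bound \eqref{eqn:hattk}. The plan is to establish (A) and (B) together by backward induction on the iterate index, and then to deduce (C) from the summable cross-ratio estimate of Theorem~\ref{theo:CR_distortion_bound}.

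For (A), I would show by backward induction on $j=s(k)-1,s(k)-2,\ldots,0$ that $c$ does not lie in the interior of $T_k^j$. The base case uses that $c_1$ is the unique global maximum of $f$, attained only at $c$: each component of $f^{-1}(T_k)=f^{-1}((a_{k-1},c_1))$ is therefore a one-sided neighborhood of $c$, so $c$ lies only on the boundary of $T_k^{s(k)-1}$. In the inductive step, assuming the diffeomorphism already holds from $j+1$ to $s(k)$, I would derive a contradiction from $c$ being interior to $T_k^j$: then $f(T_k^j)$ is an interval with $c_1$ as a boundary point, forcing $c_1\in\overline{T_k^{j+1}}$, and pushing forward through the established diffeomorphism $f^{s(k)-j-1}:T_k^{j+1}\to T_k$ places an iterate of the critical point too close to $c_1$ in a way that, combined with Lemma~A, Schwartz's result $f^{n_k}(Q)\to c$, and the maximality of $Q$ as a wandering interval, violates the definition of $s(k)$ as the $k$th closest-return moment. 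Assertion (B) follows by an analogous analysis at $j=0$: if $c_1\in T_k^0$, then tracking $c_1$ forward through the diffeomorphism of (A) would produce an interval strictly containing $Q$ with the wandering property, contradicting maximality.

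For (C), once (A) is in hand, Theorem~\ref{theo:CR_distortion_bound} gives
\[
\log B(f^{s(k)},\widehat{T}_k^0,Q)\;\ge\;-C_0\sum_{j=0}^{s(k)-1}|\widehat{T}_k^j|,
\]
so it suffices to bound $\sum_{j=0}^{s(k)-1}|\widehat{T}_k^j|$ by a constant independent of $k$. Decompose $\widehat{T}_k=L_k\cup Q_{s(k)}\cup R_k$ with $L_k=(a_{k-1},a_k)$ and $R_k=(b_k,b_{k+1})$; by (A) this induces a decomposition $\widehat{T}_k^j=L_k^j\cup Q_j\cup R_k^j$ for each $0\le j<s(k)$. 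Since $Q$ wanders, the $Q_j$ are pairwise disjoint, contributing at most $1$ to the sum. For the wings, the plan is to establish a uniform multiplicity bound: each family $\{L_k^j\}$, $\{R_k^j\}$ covers each point of $[0,1]$ at most a uniformly bounded number of times, so that each contributes $O(1)$ to the total.

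The main obstacle is the multiplicity bound for the wing pullbacks in (C). A naive pairwise-disjointness argument fails because an $L_k^j$ or $R_k^j$ may cross the critical point before mapping diffeomorphically to its image. The key feature will be the choice $\widehat{T}_k=(a_{k-1},b_{k+1})$ with $b_{k+1}$ rather than $c_1$ as the right endpoint: this pins both wings between the adjacent closest returns $s(k-1)$ and $s(k+1)$, so that excessive overlap of pullbacks produces an iterate $Q_{j'}$ with $0\le j'<s(k)$ strictly closer to $c_1$ than $Q_{s(k)}$, contradicting the definition of $s(k)$.
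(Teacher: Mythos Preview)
Your overall plan has the right shape---backward analysis for (A)/(B), then Theorem~\ref{theo:CR_distortion_bound} plus a multiplicity bound for (C)---but the mechanism you propose for the contradictions is not the one that actually works, and in one place it is circular.

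For (A) and (B), the paper's argument is both simpler and different from your sketch. If $c_1\in T_k^n$ for some maximal $n$ with $0\le n<s(k)$, then (since $n$ cannot be a closest return moment beyond $s(k-1)$) one has $Q_n$ equal to or to the left of $Q_{s(k-1)}$, hence $T_k\subset T_k^n$; thus $f^{s(k)-n}$ maps $\overline{T_k^n}$ homeomorphically into itself, forcing every point of $T_k^n$ to have a periodic $\omega$-limit set, contradicting $Q_n\subset T_k^n$. Your proposal instead invokes Lemma~A, Schwartz's result, and the maximality of $Q$; but Lemma~A is proved \emph{using} the present lemma (it appears immediately afterward and relies on \eqref{eqn:hattk}), so appealing to it here would be circular, and the remaining ingredients do not produce a contradiction from merely knowing $c_{s(k)-j}\in T_k$. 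Your argument for (B) (``tracking $c_1$ forward \ldots\ would produce an interval strictly containing $Q$ with the wandering property'') also does not work as stated: pushing $c_1$ forward only gives $c_{s(k)+1}\in T_k$, which is harmless by itself; the real contradiction is again the self-map $T_k^0\to T_k^0\supset T_k$.

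For (C), you correctly reduce to a multiplicity bound on the pullbacks of the wings $L=(a_{k-1},a_k)$ and $R=(b_k,b_{k+1})$, and you are right that the choice of $b_{k+1}$ (not $c_1$) as the right endpoint is what makes this possible. The paper shows that the $f^j(R)$, $0\le j<s(k)$, are pairwise disjoint, and that the $f^j(L)$ have multiplicity at most two (an overlap forces $j_2-j_1=s(k)-s(k-1)$). However, the contradiction in each case is \emph{not} simply ``some $Q_{j'}$ with $j'<s(k)$ ends up closer to $c_1$ than $Q_{s(k)}$.'' The closest-return combinatorics is used to locate the relevant $Q_j$ relative to $T_k$, but the punchline is once more that an overlap can be leveraged into an interval $I$ with $f^m$ mapping $I$ homeomorphically into itself, which is incompatible with $I$ containing an iterate of $Q$.

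So the idea you are missing, and which drives all three parts, is the single contradiction device: \emph{find an interval that is mapped monotonically into itself by some iterate; then every $\omega$-limit in it is periodic, but it contains an iterate of the wandering interval.}
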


\begin{proof} Let us prove the first statement by contradiction. 
So suppose that this is not true. Then there exists a maximal $0 < n< s(k)$ such
that $c_1 \in T_k^n$. Since $s(k-1)$ and $s(k)$ are closest return moments,
either $Q_n=Q_{s(k-1)}$ or $Q_n$ lies to the left of $Q_{s(k-1)}$, so $T_k \subset T_k^n$. Thus
$f^{s(k)-n}$ maps $\overline{T_k^n}$ homeomorphically into itself, which 
implies that for each $x\in T^n_k$, $\omega(x)$ is a periodic orbit. This is a contradiction, since 
$T^n_k$ contains the wandering interval $Q_n$.

To prove (\ref{eqn:hattk}), let $L, R\subset \hT_k^0$ be such that $f^{s(k)}(L)=(a_{k-1}, a_k)$ and $f^{s(k)}(R)=(b_k, b_{k+1})$. By Theorem~\ref{theo:CR_distortion_bound}, it suffices to show that 
\begin{equation}\label{eqn:LA}
    \sum_{j=0}^{s(k)-1}|f^j(L)|\le 2
\end{equation}
and 
\begin{equation}\label{eqn:RA}
    \sum_{j=0}^{s(k)-1}|f^j(R)|\le 1,
\end{equation}
since $$\sum_{j=0}^{s(k)-1} |f^j(\hT_k^0)|= \sum_{j=0}^{s(k)-1}|f^j(L)|+\sum_{j=0}^{s(k)-1}|f^j(R)|+\sum_{j=0}^{s(k)-1} |Q_j|,$$
and $$\sum_{j=0}^{s(k)-1}|Q_j|\le 1.$$

To prove (\ref{eqn:LA}), it suffices to show that 
if $f^{j_1}(L)\cap f^{j_2}(L)\not=\emptyset$ for some $0\le j_1<j_2< s(k)$, then $j_2-j_1=s(k)-s(k-1)$.   
Let $j=s(k)-j_2+j_1$. Arguing by contradiction, suppose there exists $y\in f^j(L)\cap (a_{k-1}, b_k)$. Assume that $j\not =s(k-1)$. Then $Q_j$ lies to the left of $T_k$, hence 
$f^j(L)\cap Q_{s(k-1)}\not=\emptyset$.
So $$f^{s(k)}(L)\cap Q_{s(k)-j+s(k-1)}\not=\emptyset,$$
which implies that $s(k)-j+s(k-1)>s(k)$, i.e. $j<s(k-1)$. Let $L':=f^{s(k)-s(k-1)}(f^j(L))$. Then
$L'\supset Q_{s(k)-s(k-1)+j}$ and 
$$L'\cap Q_{s(k)}\supset f^{s(k)-s(k-1)}(f^j(L)\cap Q_{s(k-1)})\not=\emptyset.$$
Since $Q_{s(k)-s(k-1)+j}$ lies to the left of $a_{k-1}$, it follows that $L'\supset f^{s(k)}(L)$. Thus $f^{s(k-1)-j}$ maps $L'$ homoemorphically into $L'$. 
But this implies that for any $x\in L'$, $\omega(x)$ is a periodic orbit, which is in contradiction with the fact that $L'$ contains a wandering interval. We have completed the proof of (\ref{eqn:LA}).

To prove (\ref{eqn:RA}), we shall show that $f^j(R)$, $0\le j<s(k)$, are pairwise disjoint. Arguing by contradiction, assume that there exists $0\le j_1<j_2<s(k)$ such that 
$f^{j_1}(R)\cap f^{j_2}(R)\not=\emptyset$. Then for $j=s(k)-j_2+j_1$, $f^j(R)\cap (b_j, b_{j+1})$ contains 
a point $y$. Since $f^j(R)$ has an endpoint in $\partial Q_j$ which lies to the left of $a_k$, we have 
$f^j(R)\supset \overline{Q_{s(k)}}$. Thus $f^{s(k)}(R)=f^{s(k)-j}(f^j(R))\supset \overline{Q_{2s(k)-j}}$. 
Therefore $2s(k)-j>s(k+1)$. But then $f^{s(k+1)-s(k)}(R)$ contains $Q_{s(k+1)}$ and has an endpoint in 
$\partial Q_{s(k+1)-s(k)+j}$ which lies to the left of $b_k$. It follows that 
$I:=f^{s(k+1)-s(k)}(R)\supset (b_k, b_{k+1}).$ Thus $f^{2s(k)-j-s(k+1)}$ maps $I$ homeomorphically into
itself and a contradiction arises as before.    
\end{proof}

\begin{proof} [Proof of Lemma A]
By Lemma \ref{lem:T_k},
for $k \in \N$, we can put $L_k$ and $R_k$ to be the connected components of 
$\widehat{T}_k^0 \setminus Q$ such that 
$f^{s_k}(L_k)=(a_{k-1}, a_k)$ and $f^{s(k)}(R_k)=(b_k, b_{k+1})$. Since $Q$ 
is a maximal wandering interval, we have 
\begin{equation}\label{eqn:epsk}
    |R_k|/|Q|\to 0,
\end{equation}
as $k \to \infty$. By Lemma~\ref{lem:T_k}, $$B(f^{s(k)}, \widehat{T}_k^0, Q) \ge M,$$ i.e.
$$D(\widehat{T}_k, Q_{s(k)})\ge M D(\widehat{T}_k^0, Q).$$
Since $$D(\widehat{T}_k^0, Q)=\frac {|\widehat{T}_k^0||Q|}{|L_k||R_k|}\ge \frac{|Q|}{|R_k|},$$
we have 
\begin{equation}\label{eqn:TkCR}
    D(\widehat{T}_k, Q_{s(k)})\to \infty
\end{equation}
as $k\to\infty$, by (\ref{eqn:epsk}).

Let us prove that there exists $k_0$ such that $\delta_k:=b_k-a_k$ decreases to $0$ for $k\ge k_0$. 
Otherwise, we can find an arbitrarily large $k$ such that $\delta_k\le \delta_{k-1}$ and 
$\delta_k\le \delta_{k+1}$. For such $k$, both components of $\widehat{T}_k\setminus Q_{s(k)}$ have length 
not smaller than $|Q_{s(k)}|$, so by Lemma~\ref{lem:onesidebig}, $D(\widehat{T}_k, Q_{s(k)})\le 3$, 
a contradiction.  

In particular, $|a_{k-1}-a_k|\ge |a_k-b_k|$ holds for all $k$ large enough. So the left component of $\widehat{T}_k\setminus Q_{s(k)}$ has length at least $|Q_{s(k)}|$. By 
Lemma~\ref{lem:onesidebig}, we obtain 
(\ref{eqn:TkCR}), this implies that
$$D(\widehat{T}_k, Q_{s(k)})\le \frac{2|a_k-b_k|}{|b_k-b_{k+1}|}+1\le \frac{2|b_{k-1}-b_k|}{|b_k-b_{k+1}|}+1.$$
By (\ref{eqn:TkCR}), 
$$\frac{|b_k-b_{k+1}|}{|b_{k-1}-b_k|}\to 0.$$
Therefore, $\eta_k\to 0$. 
\end{proof}

\subsection{Proof of Lemma B}\label{subsec:lemB}
Let $a_k', b_k'$ denote the endpoints of $Q_{s(k)-1}$ such that
$$f(a_k')=a_k,\, f(b_k')=b_k.$$ Put $B_k \= f^{-1}((a_{k-1}, c_1])$. For all
$k$ large enough, $B_k$ is the open interval bounded by $a'_{k-1}$ and $\tau(a'_{k-1})$, the two points in
$f^{-1}(a_{k-1})$. For $0 \leq j < s(k)$, let $B_k^j$ denote the connected
component of $f^{-(s(k)-1-j)}(B_k)$ that contains $Q_j$. So $B_k=B_k^{s(k)-1}$.

\begin{lemm}\label{lem:Bk}
    For all $k$ sufficiently large, the following hold:
    \begin{enumerate}
        \item[(1)] $f^{s(k)-s(k-1)-1}: B_k^{s(k-1)}\to B_k$ is a homeomorphism;
        \item[(2)] $a_{k-2}\not\in B_k^{s(k-1)}$.
        \item[(3)] The intervals $B_k^j$, $s(k-1)\le j<s(k)-1$ are pairwise disjoint, hence 
        $$\sum_{j=s(k-1)}^{s(k)-2}|B_k^j|\le 1.$$
    \end{enumerate}
\end{lemm}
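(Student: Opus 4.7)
The plan is to prove all three parts by combinatorial pullback arguments parallel to the proof of Lemma~3.2, using that $s(k-1)$ and $s(k)$ are consecutive closest-return moments of $Q$ to $c_1$, so any $Q_i$ with $s(k-1)<i<s(k)$ satisfies $\dist(Q_i,c_1)\ge\dist(Q_{s(k-1)},c_1)$, and that by Lemma~A we may assume $Q_{s(k)}$ lies strictly to one side of $c_1$ for all large $k$.

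For (1), I argue by contradiction. Suppose $c\in B_k^j$ for some $s(k-1)\le j\le s(k)-2$, and take the maximal such $j=j^*$. Then for $j^*<i\le s(k)-2$, $c\notin B_k^i$, so $f^{s(k)-2-j^*}$ is a homeomorphism from $B_k^{j^*+1}$ onto $B_k$. Since $c_1=f(c)\in f(B_k^{j^*})\subset B_k^{j^*+1}$, applying this homeomorphism yields $c_m\in B_k$ for $m=s(k)-1-j^*\in[1,\,s(k)-s(k-1)-1]$, and hence $c_{m+1}\in f(B_k)=(a_{k-1},c_1]$. To finish, I would use that $B_k^{j^*}$ straddles $c$ and contains $Q_{j^*}$ to produce, via pullback and the closest-return property, a closed interval $I$ and an iterate $\ell\le s(k)$ with $f^\ell(\overline{I})\subset\overline{I}$ and $f^\ell|_I$ monotone; then an iterate of $Q$ inside $I$ would have a periodic $\omega$-limit, contradicting the wandering property, exactly as in Lemma~3.2.

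Given (1), set $g_k:=f^{s(k)-s(k-1)-1}\colon B_k^{s(k-1)}\to B_k$, a homeomorphism. For (2), suppose $a_{k-2}\in B_k^{s(k-1)}$. Then $g_k(a_{k-2})\in B_k$ and $f(g_k(a_{k-2}))\in(a_{k-1},c_1]$. Since the orbit of $a_{k-2}$ avoids $c$ during these iterates (by (1)) and $a_{k-2}$ is a specific endpoint of $Q_{s(k-2)}$, the image $f(g_k(a_{k-2}))$ is an endpoint of $Q_\nu$ with $\nu=s(k-2)+s(k)-s(k-1)\in[0,s(k)-1]$. A location argument, using disjointness of $Q_{s(k-2)}$ and $Q_{s(k-1)}$ to control which component of $B_k\setminus Q_{s(k)-1}$ contains $g_k(a_{k-2})$, places this endpoint strictly between $b_{k-1}$ and $c_1$, so $\dist(Q_\nu,c_1)<\dist(Q_{s(k-1)},c_1)$. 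This contradicts the closest-return property at $s(k-1)$ if $\nu<s(k-1)$, or at $s(k)$ if $\nu>s(k-1)$. For (3), if $B_k^{j_1}\cap B_k^{j_2}\ne\emptyset$ for some $s(k-1)\le j_1<j_2\le s(k)-2$, then (1) gives $f^{j_2-j_1}$ as a homeomorphism from $B_k^{j_1}$ onto $B_k^{j_2}$; the closest-return property forces one of these intervals to contain the other (exactly as in Lemma~3.2), so $f^{j_2-j_1}$ maps the closure of the larger one homeomorphically into itself, producing a periodic $\omega$-limit for the iterate of $Q$ it contains — contradiction. Pairwise disjointness then yields the length bound $\sum_{j=s(k-1)}^{s(k)-2}|B_k^j|\le 1$, since all intervals lie in $[0,1]$.

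The main obstacle is the combinatorial bookkeeping in part (1): constructing the $f^\ell$-invariant subinterval $I$ from the premature critical return $c_m\in B_k$ requires carefully identifying on which side of $c$ the intervals $B_k^{j^*}$ and $Q_{j^*}$ lie, and tracking orientation reversals of $f$ across successive iterations. Once (1) is in place, parts (2) and (3) reduce to direct applications of pullback monotonicity and the closest-return property, essentially mirroring the arguments in Lemma~3.2.
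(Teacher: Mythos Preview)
Your proposal has a genuine gap in part~(1), and the proposed mechanisms for (2) and (3) are more complicated than needed and not fully justified.

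\textbf{Part (1).} You track the critical orbit point $c_m\in B_k$ and then say you would ``produce \dots\ a closed interval $I$ and an iterate $\ell$ with $f^\ell(\overline I)\subset\overline I$,'' admitting this is the main obstacle. The paper avoids this entirely by tracking $Q_{s(k-1)-1}$ instead of $c$. The point is that once you know $B_k^{j^*+1}\supset (a_{k-1},c_1]$ (from the closest-return property applied to $Q_{j^*+1}$), pulling back through $c$ gives not just $B_k\subsetneq B_k^{j^*}$ but also $Q_{s(k-1)-1}\subset B_k^{j^*}$, because $B_k^{j^*+1}$ strictly overshoots $a_{k-1}$ on the left. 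Now push $Q_{s(k-1)-1}$ forward by $f^{s(k)-1-j^*}$ into $B_k$ and then by $f$ into $(a_{k-1},c_1]$: you get $Q_m\subset (a_{k-1},c_1]$ with $m=s(k)+s(k-1)-j^*-1\in(s(k-1),s(k))$. That forces $Q_m$ to lie inside $Q_{s(k-1)}$ (its left endpoint is $>a_{k-1}$, and closest-return gives its right endpoint $\le b_{k-1}$), contradicting disjointness. No invariant interval is needed.

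\textbf{Part (3).} Your claim that ``the closest-return property forces one of $B_k^{j_1},B_k^{j_2}$ to contain the other'' is not justified; nothing like the $T_k^n\ni c_1$ mechanism from Lemma~3.2 is available here since by (1) none of the $B_k^j$ contain $c$. The paper instead shifts to $j=s(k)-1-(j_2-j_1)$ so that $B_k^j\cap B_k\neq\emptyset$, and then observes that $B_k^{j+1}=f(B_k^j)$ both meets $(a_{k-1},c_1]$ and contains $Q_{j+1}$, which lies strictly left of $a_{k-1}$; hence $B_k^{j+1}$ meets $\operatorname{int}(Q_{s(k-1)})$. Pushing this intersection forward by the diffeomorphism $f^{s(k)-2-j}|_{B_k^{j+1}}$ and then $f$ lands a point of some $Q_\mu$, $\mu\in(s(k-1),s(k))$, inside $(a_{k-1},c_1]$---the same contradiction as in~(1).

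\textbf{Part (2).} Your ``location argument'' aiming for $\dist(Q_\nu,c_1)<\dist(Q_{s(k-1)},c_1)$ is unnecessary and its orientation bookkeeping is delicate. The paper simply notes that $a_{k-2}\in B_k^{s(k-1)}$ forces $\overline{Q_{s(k-2)}}\subset B_k^{s(k-1)}$, so $Q_\nu\subset(a_{k-1},c_1]$ with $\nu=s(k-2)+s(k)-s(k-1)<s(k)$; then disjointness with $Q_{s(k-1)}$ is violated exactly as above.

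The common thread you are missing is that every contradiction in this lemma is obtained by producing some $Q_m$ with $m<s(k)$, $m\neq s(k-1)$, meeting $(a_{k-1},c_1]$; this immediately clashes with disjointness of $Q_m$ and $Q_{s(k-1)}$, and no self-map or invariant-interval construction is required.
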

\begin{proof}
Let us first prove (1). Suppose that it is not true.
Thus, there exists a maximal $s(k-1) \le j <s(k)-1$ such that $B_k^j$ 
contains $c$. Since $B_k^{j+1}$ contains $c_1$ and $Q_{j+1}$, and $s(k-1)<j+1<s(k)$, we must have 
\[ \dist (Q_{j+1},c_1) > \dist (Q_{s(k-1)},c_1). \] Thus, 
$[a_{k-1},c_1] \subset B_k^{j+1}$, and hence $B_k \subsetneq B_k^j$. This
implies that $Q_{s(k-1)-1} \subset B_k^j$, and applying $f^{s(k)-j}$
we get \[Q_{s(k)+s(k-1)-1-j} \subset B_k.\] Thus, 
$Q_{s(k)+s(k-1)-j} \subset (a_{k-1}, c_1]$. Since \[ s(k)+s(k-1)-j
< s(k)\] this is a contradiction.

Let us now prove (2). Suppose that $a_{k-2} \in B_k^{s(k-1)}$. Then
$\overline{Q_{s(k-2)}}\subset B_k^{s(k-1)}$. Consequently,
\[Q_{s(k)-s(k-1)+s(k-2)}=f^{s(k)-s(k-1)}(Q_{s(k-2)})\subset 
f(B_k)=(a_{k-1}, c_1].\] Since $s(k)-s(k-1)+s(k-2) < s(k)$, this cannot 
happen.

Finally, let us prove (3). It suffices to show that the intervals $B_k^j$, $s(k-1)\le j<s(k)-1$, are 
pairwise disjoint. 
Arguing by contradiction, assume that there exist $s(k-1)\le j_1<j_2<s(k)-1$ such that 
$B_k^{j_1}\cap B_k^{j_2}$ is non-empty. Then $B_k\cap B_k^j$ is non-empty, where $j=s(k)-1-j_2+j_1$.
Since $s(k-1)<j<s(k)-1$, $Q_{j+1}$ lies to the left of $f(B_k)$. Since $f(B_k^j)\supset Q_{j+1}$ and it 
intersects $f(B_k)$, we obtain that
$f(B_k^j)$ intersects the interior of $Q_{s(k-1)}$. 
Thus $f(B_k)= f^{s(k)-j}(B_k^j)$ intersects the interior of $Q_{s(k-1)+s(k)-j}$. Since 
$s(k-1)<s(k-1)+s(k)-j<s(k)$, this is impossible.
\end{proof}

\begin{figure}
    \centering
    \begin{tikzpicture}[line cap=round,line join=round,>=triangle 45,x=1.6cm,y=1.6cm]
      \clip(-5.3,-1.9) rectangle (4.3,1.9);
      \draw (-0.3,0.5)-- (-5,0.5);
      \draw (4,0.5)-- (0.5,0.5);
      \draw (1.5,-1)-- (2.9,-1);
      \draw (-5.2,0.9) node[anchor=north west] {$\tau(a_{k-1}'')$};
      \draw (-5,0.4) -|(-5,0.6);
      \draw (-4.15,0.9) node[anchor=north west] {$c$};
      \draw (-4,0.4) -|(-4,0.6);
      \draw (-3.45,0.95) node[anchor=north west] {$b_k'$};
      \draw (-3.3,0.4) -|(-3.3,0.6);
      \draw (-2.55,0.95) node[anchor=north west] {$a_k'$};
      \draw (-2.4,0.4) -|(-2.4,0.6);
      \draw (-2.15,0.95) node[anchor=north west] {$b_{k-1}'$};
      \draw (-2,0.4) -|(-2,0.6);
      \draw (-0.55,0.95) node[anchor=north west] {$a_{k-1}''$};
      \draw (-0.45,0.4) -|(-0.45,0.6);
      \draw (-5,0.54)-- (-0.45,0.54);
      \draw [dash pattern=on 1pt off 1pt] (-3.3,0.46)-- (-2.4,0.46);
      \draw [dash pattern=on 1pt off 1pt] (-2,0.46)-- (-0.45,0.46);
      \draw (-3,0.4) node[anchor=north west] {$B_k$};
      \draw (-3.25,1.2) node[anchor=north west] {$Q_{s(k)-1}$};
      \draw (-1.75,1.2) node[anchor=north west] {$Q_{s(k-1)-1}$};
      \draw (0.55,0.95) node[anchor=north west] {$a_{k-2}$};
      \draw (0.65,0.4) -|(0.65,0.6);
      \draw (1.45,0.95) node[anchor=north west] {$a_{k-1}$};
      \draw (1.65,0.4) -|(1.65,0.6);
      \draw (2.15,0.95) node[anchor=north west] {$b_{k-1}$};
      \draw (2.45,0.4) -|(2.45,0.6);
      \draw (1.45,-1.1) node[anchor=north west] {$a_{k-1}$};
      \draw (1.65,-1.1) -|(1.65,-0.9);
      \draw (2.15,-1.1) node[anchor=north west] {$b_{k-1}$};
      \draw (2.45,-1.1) -|(2.45,-0.9);
      \draw (1.6,-0.5) node[anchor=north west] {$Q_{s(k-1)}$};
      \draw (3,-0.8) node[anchor=north west] {$B_k^{s(k-1)}$};
      \draw [dash pattern=on 1pt off 1pt] (1.65,-0.97) --(2.45,-0.97);
      \draw [dash pattern=on 1pt off 1pt] (1.65,0.46)-- (2.45,0.46);
      \draw (1.65,0.4) node[anchor=north west] {$Q_{s(k-1)}$};
      \draw (2.75,0.95) node[anchor=north west] {$a_{k}$};
      \draw (2.95,0.4) -|(2.95,0.6);
      \draw (3.25,0.95) node[anchor=north west] {$b_{k}$};
      \draw (3.35,0.4) -|(3.35,0.6);
      \draw [dash pattern=on 1pt off 1pt] (2.95,0.46)-- (3.35,0.46);
      \draw (2.9,0.4) node[anchor=north west] {$Q_{s(k)}$};
      \draw (3.85,0.95) node[anchor=north west] {$c_1$};
      \draw (4,0.4) -|(4,0.6);
    \end{tikzpicture}
    \caption{On top, the double lined interval denote $B_k$. On the bottom, 
    we have the interval $B_k^{s(k-1)}$. Dashed lines represent closest 
    return of $Q$ to $c_1$, and pre-closest return around $c$.}
    \label{fig:B_k_graph}
\end{figure}
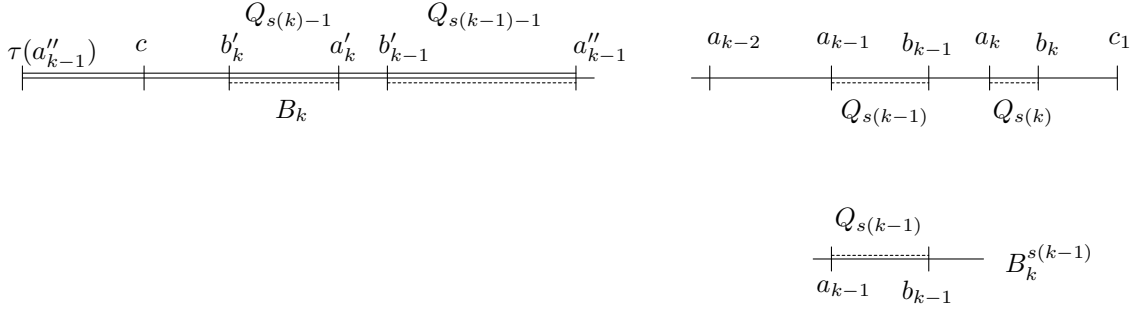

\begin{proof}[Proof of Lemma B] 
We shall only deal with the case $\ell_-\le \ell_+$ as the other case is similar. Let $k$ be a large 
integer. We shall first show that $Q_{s(k)-1}$ lies to the right hand side of $c$. 
Let $a_{k-1}''$ (resp. $b''_{k-1}$) denote the point in $f^{-1}(a_{k-1})$ (resp. $f^{-1}(b_{k-1})$) which 
is in the same side of $c$ as $a_k'$. We distinguish two cases. 

\noindent \textbf{ Case 1.} $B_k^{s(k-1)}\not \ni c_1$. 
Then
$$D(B_k, Q_{s(k)-1})^{-1}=\frac{|\tau(a''_{k-1})-b_k'|}{|\tau(a''_{k-1})-a''_{k-1}|}
\frac{|a''_{k-1} - a_k'|}{|a'_k-b'_k|}\ge \frac{|\tau(a''_{k-1})-c|}{|\tau(a''_{k-1})-a''_{k-1}|}\frac{|a''_{k-1}-a'_k|}{|a'_k-c|}.$$
Denote by $R$ be the right component of $B_k^{s(k-1)}\setminus Q_{s(k-1)}$. Then 
$$\eta_{k-1}\ge \frac{|R|}{|Q_{s(k-1)}|}\ge D(B_k^{s(k-1)}, Q_{s(k-1)})^{-1}.$$
By Lemma~\ref{lem:Bk} and Theorem~\ref{theo:CR_distortion_bound}, we have 
\begin{equation}\label{eqn:Dbk}
D(B_k,Q_{s(k)-1})\ge M\cdot D(B_k^{s(k-1)}, Q_{s(k-1)}),
\end{equation}
where $M>0$ is a constant. Thus 
\begin{equation}\label{eqn:ak-1ak}
    \eta_{k-1} \ge M \frac{|\tau(a''_{k-1})-c|}{|\tau(a''_{k-1})-a''_{k-1}|}\frac{|a''_{k-1}-a'_k|}{|a'_k-c|}
\end{equation}

By Lemma A and the local behavior of $f$ near $c$, $|a''_{k-1}-a'_k|/|a'_k-c|$ is large, hence 
\begin{equation}
\frac{|\tau(a_{k-1}'')-c|}{|\tau(a''_{k-1})-a''_{k-1}|}\le \frac{\eta_{k-1}}{2}
\end{equation}
is small, provided that $k$ is large enough, again by Lemma A. 
It follows that 
that $a_k'$ lies to the right of $c$.

\noindent \textbf{Case 2.} $B_k^{s(k-1)}\ni c_1$. 
Put $H_k \=(u,c_1)$, where $u$ is the left endpoint of $B_k^{s(k-1)}$. 
Let $$x=c_{s(k)-s(k-1)},\text{ and } g=f^{s(k)-s(k-1)-1}.$$ 
By Lemma~\ref{lem:T_k}, $g$ maps $T_k^{s(k-1)}$ homeomorphically onto 
$T_k^{s(k)-1}$ and $T_k^{s(k-1)}\not\ni c_1$. Since $T_k^{s(k)-1}$ is the component of $B_k\setminus \{c\}$ 
containing $Q_{s(k)-1}$ and $T_k^{s(k-1)}\supset Q_{s(k-1)}$, we have $T_k^{s(k-1)}\subset H_k$. Therefore, 
$x$ and $Q_{s(k)-1}$ lies in different sides of $c$, and  
\begin{equation}\label{eqn:bk-1bk}
    g(b_{k-1})=b_k',  \, f^{s(k)-s(k-1)}(b_{k-1})=b_k, \text{ and }a''_{k-1}=g(u).
\end{equation}

Let us show that there exists $y\in [b_{k-1}, c_1]$ such that 
\begin{equation}\label{eqn:Dfy}
|Df^{s(k)-s(k-1)}(y)|\ge \frac{1}{2}.
\end{equation}

Arguing by contradiction, assume that this is false.  Then for any $y\in [b_{k-1}, c_1]$, 
$$|f^{s(k)-s(k-1)}(y)-b_k|=|f^{s(k)-s(k-1)}(y)-f^{s(k)-s(k-1)}(b_{k-1})|\le \frac{1}{2}|b_{k-1}-c_1|,$$
where we use (\ref{eqn:bk-1bk}) for the equality. 
Note that 
$f^{s(k)-s(k-1)}(y)\le c_1$ for each $y \in [b_{k-1},c_1]$. By Lemma A, $|b_k-c_1|/|b_{k-1}-c_1|<\eta_k$ is
small. Thus $f^{s(k)-s(k-1)}$ maps $[b_{k-1}, c_1]$ into itself as a contraction. Thus for any 
$y\in [b_{k-1}, c_1]$, $\omega(y)$ is a periodic point. This is in contradiction with the fact that 
$[b_{k-1},c_1]$ contains a wandering interval $Q_{s(k)}$. 

By Lemma~\ref{lem:Bk} and Theorem~\ref{theo:CR_distortion_bound},
$$B(g, H_k, Q_{s(k-1)})\ge M,$$
where $M>0$ is a constant. 
Thus, 
\begin{multline*}
D((a''_{k-1},x), Q_{s(k)-1})=\frac{|a_{k-1}''-x||a_k'-b_k'|}{|a_{k-1}''-a_k'||x-b_k'|}\ge M \frac{|H_k||a_{k-1}-b_{k-1}|}{|u-a_{k-1}||b_{k-1}-c_1|}\\
\ge M \frac{|a_{k-1}-b_{k-1}|}{|b_{k-1}-c_1|}=M(\eta_{k-1}^{-1}-1)\ge \frac{M}{2\eta_{k-1}}+1
\end{multline*}
since $\eta_{k-1}$ is small by Lemma A. Again by Lemma A, $|a'_k-b'_k|\le |a_{k-1}''-a_k'|$. So by Lemma~\ref{lem:onesidebig},
$$D((a''_{k-1},x), Q_{s(k)-1})\le \frac{2|a'_k-b'_k|}{|b_k'-x|}+1.$$
Thus
$$|x-b_k'|\le \frac{4}{M}\eta_{k-1} |a_k'-b_k'|. $$
In particular, for each $y'\in [b_k',x]$, we have 
\begin{equation}\label{eqn:xc}
|y'-c|\le \frac{4}{M}\eta_{k-1}|a_k'-c|<|a'_k-c|.
\end{equation}

Let us prove that $Q_{s(k)-1}$ lies to the right of $c$. Arguing by contradiction, assume the contrary. 
Then, by (\ref{eqn:xc}) and Lemma A, both components of  $B_k\setminus [a_k',x]$ have length larger than $|b_k'-x|$. 
By Lemma~\ref{lem:Bk} and Theorem~\ref{theo:CR_distortion_bound}, for any intervals $J_*\subset T_*\subset H_k$, 
$$B(f^{s(k)-s(k-1)-1}, T_*, J_*)\ge M.$$
So by the one-sided Koebe principle, 
\begin{equation}\label{eqn:Dgy}
|Dg(y)|\le C\frac{|a_k'-b'_k|}{|a_{k-1}-b_{k-1}|}\le C'\frac{|a_k'-c|}{|a_{k-1}-c|}
\end{equation}
for each $y\in [b_{k-1}, c_1]$.
Since $y':=g(y)\in [b_k', x]$, we have 
$$|Df(y')|\le A |y'-c|^{\ell_--1}\le A |a_k'-c|^{\ell_--1},$$
where $A>0$  is a constant, and the last inequality follows from (\ref{eqn:xc}).
Consequently, 
\begin{equation}\label{eqn:Dfyy}
|Df^{s(k)-s(k-1)}(y)|\le C'' \frac{|a_k'-c|^{\ell_-}}{|a_{k-1}-c_1|}\le C''\frac{|a_k-c_1|}{|a_{k-1}-c_1|}<\frac{1}{2},
\end{equation}
where the last inequality follows from Lemma A (assuming $k$ large). This is in contradiction with (\ref{eqn:Dfy}).

So far, we have shown that if $\ell_-\le \ell_+$, then $Q_{s(k)-1}$ lies to the right of $c$ when $k$ is large enough. 
If $\ell_-\ge \ell_+$, then a similar argument shows that $Q_{s(k)-1}$ cannot
lie in the left hand side of $c$ when $k$ is large enough. 
Thus $\ell_-\le \ell_+$ implies that $\ell_-<\ell_+$.

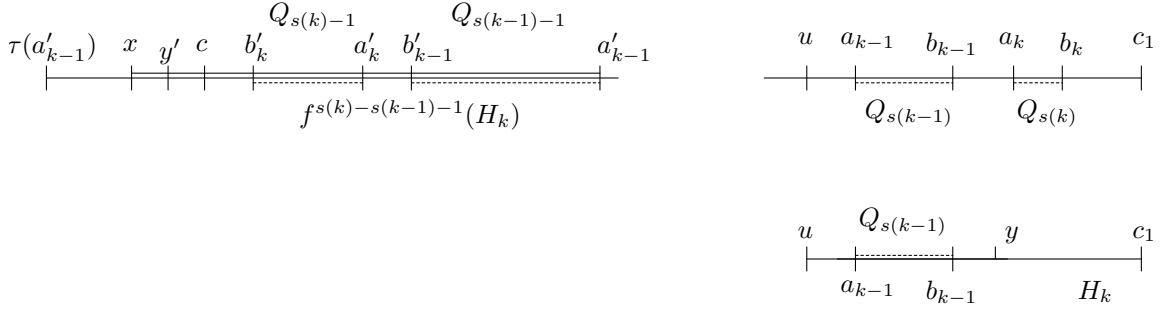
\begin{figure}
    \centering
    \begin{tikzpicture}[line cap=round,line join=round,>=triangle 45,x=1.6cm,y=1.6cm]
      \clip(-5.3,-1.9) rectangle (4.3,1.9);
      \draw (-0.3,0.5)-- (-5,0.5);
      \draw (4,0.5)-- (0.9,0.5);
      \draw (1.5,-1)-- (2.9,-1);
      \draw (-5.4,0.95) node[anchor=north west] {$\tau(a_{k-1}')$};
      \draw (-5,0.4) -|(-5,0.6);
      \draw (-4.45,0.9) node[anchor=north west] {$x$};
      \draw (-4.3,0.4) -|(-4.3,0.6);
      \draw (-4.15,0.9) node[anchor=north west] {$y'$};
      \draw (-4.0,0.4) -|(-4.0,0.6);
      \draw (-3.85,0.9) node[anchor=north west] {$c$};
      \draw (-3.7,0.4) -|(-3.7,0.6);
      \draw (-3.45,0.95) node[anchor=north west] {$b_k'$};
      \draw (-3.3,0.4) -|(-3.3,0.6);
      \draw (-2.55,0.95) node[anchor=north west] {$a_k'$};
      \draw (-2.4,0.4) -|(-2.4,0.6);
      \draw (-2.15,0.95) node[anchor=north west] {$b_{k-1}'$};
      \draw (-2,0.4) -|(-2,0.6);
      \draw (-0.55,0.95) node[anchor=north west] {$a_{k-1}'$};
      \draw (-0.45,0.4) -|(-0.45,0.6);
      \draw (-4.3,0.54)-- (-0.45,0.54);
      \draw [dash pattern=on 1pt off 1pt] (-3.3,0.46)-- (-2.4,0.46);
      \draw [dash pattern=on 1pt off 1pt] (-2,0.46)-- (-0.45,0.46);
      \draw (-3,0.4) node[anchor=north west] {$f^{s(k)-s(k-1)-1}(H_k)$};
      \draw (-3.25,1.2) node[anchor=north west] {$Q_{s(k)-1}$};
      \draw (-1.75,1.2) node[anchor=north west] {$Q_{s(k-1)-1}$};
      \draw (1.45,0.95) node[anchor=north west] {$a_{k-1}$};
      \draw (1.65,0.4) -|(1.65,0.6);
      \draw (2.15,0.95) node[anchor=north west] {$b_{k-1}$};
      \draw (2.45,0.4) -|(2.45,0.6);
      \draw (1.1,-0.65) node[anchor=north west] {$u$};
      \draw (1.25,-1.1) -|(1.25,-0.9);
      \draw (1.25,-1)-- (4,-1);
      \draw (3.85,-0.65) node[anchor=north west] {$c_1$};
      \draw (4,-1.1) -|(4,-0.9);
      \draw (2.80, -0.65) node[anchor=north west] {$y$};
      \draw (2.8,-1.0) -|(2.8,-0.9);
      \draw (1.45,-1.1) node[anchor=north west] {$a_{k-1}$};
      \draw (1.65,-1.1) -|(1.65,-0.9);
      \draw (2.15,-1.1) node[anchor=north west] {$b_{k-1}$};
      \draw (2.45,-1.1) -|(2.45,-0.9);
      \draw (1.6,-0.5) node[anchor=north west] {$Q_{s(k-1)}$};
      \draw (3.4,-1.1) node[anchor=north west] {$H_k$};
      \draw [dash pattern=on 1pt off 1pt] (1.65,-0.97) --(2.45,-0.97);
      \draw [dash pattern=on 1pt off 1pt] (1.65,0.46)-- (2.45,0.46);
      \draw (1.65,0.4) node[anchor=north west] {$Q_{s(k-1)}$};
      \draw (2.75,0.95) node[anchor=north west] {$a_{k}$};
      \draw (2.95,0.4) -|(2.95,0.6);
      \draw (3.25,0.95) node[anchor=north west] {$b_{k}$};
      \draw (3.35,0.4) -|(3.35,0.6);
      \draw [dash pattern=on 1pt off 1pt] (2.95,0.46)-- (3.35,0.46);
      \draw (2.9,0.4) node[anchor=north west] {$Q_{s(k)}$};
      \draw (1.1,0.95) node[anchor=north west] {$u$};
      \draw (1.25,0.4) -|(1.25,0.6);
      \draw (3.85,0.95) node[anchor=north west] {$c_1$};
      \draw (4,0.4) -|(4,0.6);
    \end{tikzpicture}
    \caption{On top, the double lined interval denote $f^{s(k)-s(k-1)-1}(H_k)$. On the bottom, we have the interval $H_k$.}
    \label{fig:B_k_H_k_graphic}
\end{figure}

To complete the proof, let $k$ be a large integer so that both $Q_{s(k)}$ and $Q_{s(k-1)}$ lies to the right hand side of $c$. So $a_{k-1}''=a_{k-1}'$. Let  
$$\alpha=\ell_+(\ell_+-1)/(\ell_++1).$$
We need to show that $$M_k:=\eta_{k-1}^\alpha |a_k-c_1|/|a_{k-1}-c_1|^{\ell_+}$$ is bounded away from zero. 
We again distinguish two cases. 

\noindent \textbf{Case 1.} $B_k^{s(k-1)}\not\ni c_1$. 

By the local behavior of $f$ near $c$, $|a''_{k-1}-a'_k|/|\tau(a''_{k-1})-a''_{k-1}|$ is bounded away from zero. By (\ref{eqn:ak-1ak}), 
$\eta_{k-1} |a'_k-c|/|\tau(a''_{k-1})-c|$ is bounded away from zero. Since  $|a_k'-c|^{\ell_+}$ is comparable to $|a_k-c_1|$, $|\tau(a''_{k-1})-c|$ is comparable to $|a_{k-1}-c_1|^{\ell_-}$ and $\ell_+\ge \ell_-\ge 1$, this implies that $M_k$ is bounded away from zero.

\textbf{Case 2.} $B_k^{s(k)-1}\ni c_1$. 

Let $x=f^{s(k)-s(k-1)}(c)$ and $g=f^{s(k)-s(k-1)-1}$ be as before, and let $\widetilde{x}=f^{s(k)-s(k-1)}(x)$. By (\ref{eqn:xc}), $|a_{k}'-b_k'|$ is much bigger than $|b_k'-x|$, the one-sided Koebe principle applies and give us 
$$|Dg(y)|\le C \frac{|a_k'-b_k'|}{|a_{k-1}-b_{k-1}|}\le C' \frac{|a_k'-c|}{|a_{k-1}-c_1|},$$
where $C, C'$ are constants and we used Lemma A for the second inequality.

Since $f^{s(k)-s(k-1)}: B_k\to B_k$ has a unique critical point $c$, we have $\widetilde{x}>c$, for otherwise, $f^{s(k)-s(k-1)}$ maps $[x,c]$ homeomorphically into itself, which implies that for any $z\in [x,c]$, $\omega(z)$ is a periodic orbit, contradicting with the fact that $[x,c]$ contains a wandering interval.
For similar reasons, $$f^{2(s(k)-s(k-1))}([c, \widetilde{x}])\supset [c, \widetilde{x}].$$  
Thus there exists $z\in [c,\widetilde{x}]$ such that 
$$|Df^{2(s(k)-s(k-1))}(z)|\ge 1.$$
Let $w=f^{s(k)-s(k-1)}(z)$. Then 
$$|Dg(f(z))|, |Dg(w)|\ge C'\frac{|a_k'-c|}{|a_{k+1}-c|}.$$
Since $z\in [c, b_k']$, we have 
$$|Dg(z)|\le A |b_k'-c|^{\ell^+-1}\le A' |b_{k}'-x|^{\ell_+-1} \le A'' \eta_{k-1}^{\ell_+-1} |a_k'-c|^{\ell_+-1}.$$
For $w$, we have 
$$|Dg(w)|\le 1.$$
Thus 
$$|Df^{2(s_k-s(k-1))}(z)|=|Df(z)||Dg(f(z))||Df(w)||Dg(w)|\le C'' \left(\frac{|a_k'-c|}{|a_{k-1}-c|}\right)^2 \eta_{k-1}^{\ell_+-1} |a_k'-c|^{\ell_+-1}.$$
Consequently, the right hand side is bounded away from zero. As $|a_k'-c|$ is comparable to $|a_k-c_1|^{1/\ell_+}$, it follows that 
$$\frac{|a_k-c_1|^{1/\ell_+}}{|a_{k-1}-c_1|^{2/(1+\ell_+)}}\eta^{\alpha}$$
is bounded away from zero, where $\alpha=(\ell_+-1)/(\ell_++1)>0$. Since $2/(\ell_++1)<1/\ell_-$.
$$\left(\frac{|a_k'-c|}{|a_{k-1}-c|}\right)^2 \eta_{k-1}^{\ell_+-1} |a_k'-c|^{\ell_+-1}$$
is bounded away from zero.  Since $|a_k'-c|^{\ell_+}$ is comparable to $|a_k-c_1|$ and $\ell_+>1$, this implies that $M_k$ is bounded away from zero. 

\end{proof}

\subsection{Proof of Lemma C}
\begin{proof} [Proof of Lemma C]
To be definite, let us assume $\ell_-\le \ell_+$. Then by Lemma~\ref{lem:position_of_Q_s(k)}, $Q_{s(k-1)}$ and $Q_{s(k)}$ both lie to the right of $c$, provided that $k$ is large enough. 
By Lemma~\ref{lem:Bk} (1), there is an interval $U\supset Q_{s(k-1)}$ such that $f^{s(k)-s(k-1)-1}$ maps $U$ diffeomorphically onto $(a_{k-1}',c)\subset B_k$. By Lemma~\ref{lem:Bk} (3) and Theorem~\ref{theo:CR_distortion_bound}, 
$$D((a'_{k-1},c), Q_{s(k)-1})\ge M D(U, Q_{s(k-1)}),$$
where $M>0$ is a constant. 
Let $y$ be the endpoint of $T_k^{s(k-1)}$ which is closer to $c_1$. 
That is, 
$$\frac{|b_{k-1}-y|}{|a_{k-1}-b_{k-1}|}\ge D(U, Q_{s(k-1)})^{-1}\ge M D((a'_{k-1},c), Q_{s(k)-1})^{-1}= M \frac{|b_k'-c|}{|a_k'-b_k'|}\frac{|a_k'-a_{k-1}'|}{|a_{k-1}'-c|}.$$
By Lemma~\ref{lem:etan0} and the local behavior of $f$ near $c$, for $k$ large enough, 
$$\frac{|b_{k-1}-y|}{|a_{k-1}-b_{k-1}|} \le 
\frac{|b_{k-1}-c_1|}{|a_{k-1}-c_1|}
\frac{|a_{k-1}-c_1|}{|a_{k-1}-b_{k-1}|}\le 2\eta_{k-1},$$
and
$$\frac{|b_k'-c|}{|a_k'-b_k'|}\frac{|a_k'-a_{k-1}'|}{|a_{k-1}'-c|}\ge \frac{|b_k'-c|}{|a_k'-c|}\frac{|b_{k-1}'-a_{k-1}'|}{|a_{k-1}'-c|}=\frac{1}{2}\eta_k^{1/\ell_+}.$$
The lemma follows.
\end{proof}

\bibliography{Biblio}{}
\bibliographystyle{alpha}
\end{document}